\newtheorem{thm}{Theorem}[section]
\newcommand{\go}[1]{\mathfrak{#1}}
\newcommand{\R}{{\rm I}\kern-0.18em{\rm R}}
\newcommand{\1}{{\rm 1}\kern-0.25em{\rm I}}
\newcommand{\E}{{\rm I}\kern-0.18em{\rm E}}
\newcommand{\p}{{\rm I}\kern-0.18em{\rm P}}
\def\@fnsymbol#1{\ensuremath{\ifcase#1\or a\or b\or c\or d\or \e\or f\or *\dagger 	\or \ddagger\ddagger \else\@ctrerr\fi}}
\title{Some More Results on Characterization of the Exponential Distribution}
\author{Lev B. Klebanov\footnote{Department of Probability and Mathematical Statistics, Charles University, Prague,
Czech Republic. e-mail: lev.klebanov@mff.cuni.cz} and Zeev E. Vol'kovich\footnote{Software Engineering Department, ORT Braude College, P.O. Box: 78, Karmiel, 21982, Israel}}
\date{}
\begin{document}

\maketitle

\begin{flushright}
Dedicated to the blessed memory \\ of professor Abram Aronovich Zinger
\end{flushright}

\begin{abstract}
There are given characterizations of the exponential distribution by the properties of independence of linear forms with random coefficients. Related results based on the constancy of regression of one statistic on a linear form are obtained.  
\end{abstract}

\section{Introduction}\label{s1} 
\setcounter{equation}{0} 

On October 11, 2019 passed away well-known specialist in Probability and Statistics professor Abram Aronovich Zinger. One of the main fields of his scientific interests was the theory of characterizations of probability distributions. He published a number of outstanding results in this field (see \cite{Z51},\cite{ZL57},\cite{Z69},\cite{ZL70}). Many of these results are connected with characterization of the normal law by the independence and/or identical distribution property of suitable statistics. In cooperation with professor Yuri Vladimirovich Linnik he was the first who provide such characterization using linear forms with random coefficients \cite{ZL70}. However, in our discussions with professor Zinger, he expressed an opinion the slightly different properties may characterize other classes of distributions. Now we are talking on independence properties of linear forms with random coefficients because characterizations of different probability distribution classes by independence of non-linear statistics are known for a very long time (see, for example, \cite{ZL57}, \cite{Z58}, \cite{ZKM}). Some characterizations by of identical distribution property and a constance of regression are known as well \cite{Z69},\cite{ZK90}. In two latest publications (\cite{K19a}, \cite{K19b}) their author tried to show that professor Zinger opinion on possibility to use the independence of linear forms with random coefficients for characterization non-normal distribution is true. Namely, such properties are suitable for characterization of two-point and hyperbolic secant distributions. The aim of this paper is to show that the exponential distribution may be characterized in the similar way.

\section{Exponential distribution and linear form with random coefficients}\label{s2} 
\setcounter{equation}{0} 

\begin{thm}\label{th1}
Let $\varepsilon_p$ be a random variable taking values $1$ with probability $p \in (0,1)$ and $0$ with probability $1-p$. Suppose that $X,Y$ are independent identically distributed (i.i.d.) random variables positive almost surely (a.s.) and independent with $\varepsilon_p$.
Consider linear forms 
\begin{equation}\label{eq1}
L_1=(1-p)a X+\varepsilon_p a Y \quad \text{and} \quad L_2= pb X+(1-\varepsilon_p) b Y,
\end{equation}
where $a,b$ are positive constants. Then the forms $L_1$ and $L_2$ are independent if and only if $X$ and $Y$ have exponential distribution.
\end{thm}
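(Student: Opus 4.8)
First I would use that $a,b>0$ only rescale the forms, so $L_1\perp L_2$ is equivalent to the independence of $(1-p)X+\varepsilon_pY$ and $pX+(1-\varepsilon_p)Y$, and we may take $a=b=1$. Write $\phi(s)=\E e^{-sX}=\E e^{-sY}$ for $s\ge0$. Conditioning on the value of $\varepsilon_p$ and using that $X,Y,\varepsilon_p$ are independent,
\[
\E e^{-sL_1-tL_2}=\phi\bigl((1-p)s+pt\bigr)\bigl(p\,\phi(s)+(1-p)\,\phi(t)\bigr),
\]
while $\E e^{-sL_1}=\phi\bigl((1-p)s\bigr)\bigl(p\,\phi(s)+1-p\bigr)$ and $\E e^{-tL_2}=\phi(pt)\bigl(p+(1-p)\,\phi(t)\bigr)$. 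Hence $L_1$ and $L_2$ are independent if and only if, for all $s,t\ge0$,
\begin{equation}
\phi\bigl((1-p)s+pt\bigr)\bigl(p\phi(s)+(1-p)\phi(t)\bigr)=\phi\bigl((1-p)s\bigr)\bigl(p\phi(s)+1-p\bigr)\,\phi(pt)\bigl(p+(1-p)\phi(t)\bigr).\tag{$\star$}
\end{equation}

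The ``if'' part is a direct substitution: with $\phi(s)=\lambda/(\lambda+s)$ one has $p\phi(s)+(1-p)\phi(t)=\lambda\bigl(\lambda+(1-p)s+pt\bigr)/\bigl((\lambda+s)(\lambda+t)\bigr)$, so the left side of ($\star$) equals $\lambda^2/\bigl((\lambda+s)(\lambda+t)\bigr)$, and a similar reduction gives the same value for the right side. (More transparently: conditionally on $\varepsilon_p=1$ the pair $(L_1/a,L_2/b)$ equals $((1-p)X+Y,\,pX)$, supported on the wedge $\{p\ell_1>(1-p)\ell_2\}$, and conditionally on $\varepsilon_p=0$ it equals $((1-p)X,\,pX+Y)$, supported on the complementary wedge; for exponential $X$ a change of variables gives joint density $\lambda^2e^{-\lambda(\ell_1+\ell_2)}$ on \emph{each} wedge, hence on the whole quadrant, so $L_1,L_2$ are i.i.d.\ exponential.)

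For the ``only if'' part I would extract a one‑variable equation from ($\star$). After checking $\E X<\infty$ (otherwise a difference‑quotient argument in ($\star$) gives a contradiction), differentiate ($\star$) in $t$ at $t=0$ and in $s$ at $s=0$; each yields a relation for $\phi'/\phi$ in terms of $\phi$, and, assuming $p\neq\tfrac12$, eliminating the common factor $\phi'(0)$ between the two gives
\[
\phi\!\left(\tfrac{1-p}{p}\,w\right)\bigl((1-p)+(2p-1)\,\phi(w)\bigr)=p\,\phi(w),\qquad w\ge0.
\]
Setting $\zeta(w):=\phi(w)/(1-\phi(w))$ turns this into the scaling identity $\zeta\!\left(\tfrac{1-p}{p}w\right)=\tfrac{p}{1-p}\,\zeta(w)$, so $g(w):=w\,\zeta(w)$ is invariant under $w\mapsto\tfrac{1-p}{p}w$; since $\phi$ is completely monotone, $\zeta$ is continuous and positive on $(0,\infty)$, so $g$ is a continuous positive function of $\log w$ with period $\bigl|\log\tfrac{1-p}{p}\bigr|$. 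On the other hand $\tfrac{1-\phi(w)}{w}=\tfrac1{w+g(w)}$ increases to $\E X$ as $w\downarrow0$, so $g(w)$ tends to a finite positive limit; a periodic function with a limit is constant, whence $g\equiv1/\E X$, i.e.\ $\phi(w)=\bigl(1+(\E X)\,w\bigr)^{-1}$, the Laplace transform of the exponential law. The case $p=\tfrac12$, where this single differentiation is not enough, is handled separately (for instance via the fact that for $p=\tfrac12$ the independent $L_1,L_2$ are in fact i.i.d.\ with $|L_1-L_2|\stackrel{d}{=}Y$, or via the alternative argument below).

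The real obstacle is exactly the resolution of the one‑variable equation — ruling out an exotic (log‑periodic) solution once complete monotonicity is imposed — together with the technical points around the differentiation and the separate case $p=\tfrac12$. An alternative route, uniform in $p$, uses the wedge picture directly: independence forces the joint law of $(L_1,L_2)$ to be a product measure that on $\{p\ell_1>(1-p)\ell_2\}$ coincides with the image of $\mathrm{Law}(X)\otimes\mathrm{Law}(Y)$ under $(x,y)\mapsto((1-p)x+y,px)$; reducing to the absolutely continuous case and substituting $\ell_2=py$, $\ell_1=(1-p)y+x$ this reads $\pi_1\bigl((1-p)y+x\bigr)\pi_2(py)=f(x)f(y)$ for all $x,y>0$, where $f$ is the density of $X$; taking logarithms and eliminating $\pi_1,\pi_2$ produces a Cauchy equation for $\log f$, so $f(x)=ce^{-\nu x}$ on its support, and a final comparison of supports forces the support to be $(0,\infty)$ with $c=\nu$. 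Either way, $X$ (and $Y$) is exponential.
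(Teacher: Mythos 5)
Your derivation of the joint Laplace transform and of the independence equation $(\star)$ agrees with the paper's (\ref{eq2})--(\ref{eq4}), but from there you take a genuinely different route. The paper sets $t=s$ to obtain the single equation (\ref{eq5}) and resolves it by the intensively monotone operator method: comparing $f$ with the family $\varphi(s;\lambda)=1/(1+\lambda s)$, showing the set of coincidence points accumulates at $0$, and invoking an identity theorem. You instead differentiate $(\star)$ once in each variable at the origin, eliminate $\phi'(0)$, and reduce to the scaling identity $\zeta\bigl(\tfrac{1-p}{p}w\bigr)=\tfrac{p}{1-p}\zeta(w)$ for $\zeta=\phi/(1-\phi)$, which you kill by the log-periodicity-plus-limit argument using $\tfrac{1-\phi(w)}{w}\uparrow\E X$. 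I checked the elimination: it does produce exactly the equation you state, and the conclusion $g\equiv 1/\E X$, hence $\phi(w)=(1+w\,\E X)^{-1}$, is correct. This is an attractive, self-contained alternative that avoids the comparison-family machinery entirely; what it costs is an extra moment discussion (the paper's argument never needs $\E X<\infty$ explicitly) and, more seriously, the degeneracy below.

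The genuine gap is the case $p=\tfrac12$. There your eliminated equation reads $\tfrac12\phi(w)=\tfrac12\phi(w)$ and carries no information, and neither of your two proposed repairs is a proof: the observation that $L_1,L_2$ would be i.i.d.\ with $|L_1-L_2|\stackrel{d}{=}Y$ is not developed into a characterization, and the ``alternative route'' via densities begins with ``reducing to the absolutely continuous case,'' which is not justified --- nothing in the hypotheses gives $X$ a density, and that reduction is where the real work would lie. (The $\E X<\infty$ step is also only asserted; the difference-quotient argument can be completed --- dividing $(\star)$'s $t$-increment by $\bigl(1-\phi(h)\bigr)/h$ and letting $h\to0$ forces $(p\phi(s)+1-p)\bigl(L+1-p\bigr)=1-p$ for an $s$-independent $L\ge 0$, which is impossible --- but you should write it out.) Note that $p=\tfrac12$ is fixable entirely within your own framework: the paper's substitution $t=s$ in $(\star)$ gives $f^2(s)=f^2(s/2)\bigl(\tfrac{1+f(s)}{2}\bigr)^2$, hence $1+\varphi(s)=2\varphi(s/2)$ for $\varphi=1/f$, so $\bigl(\varphi(s)-1\bigr)/s$ is invariant under $s\mapsto s/2$ and your limit argument at $0$ applies verbatim. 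With that case supplied and the moment step written out, the proof is complete.
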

\begin{proof}
Laplace transform of the random vector $(L_1,L_2)$ has the following form
\begin{equation}\label{eq2}
\E \exp\{-sL_1 - tL_2\} = f(a(1-p)s+bpt)\Bigl(f(as)p +f(bt)(1-p)\Bigr),
\end{equation}
where $f$ is Laplace transform of the random variable $X$. Random forms $L_1$ and $L_2$ are independent if and only if 
\[ \E \exp\{-sL_1 - tL_2\} = \E \exp\{-sL_1\} \E \exp\{-tL_2\}, \] 
what is equivalent to
\[f(a(1-p)s+bpt)\Bigl(f(as)p +f(bt)(1-p)\Bigr)= \]
\begin{equation}\label{eq3}
= f(a(1-p)s)\Bigl(f(as)p +(1-p)\Bigr)f(bpt)\Bigl(p +f(bt)(1-p)\Bigr).
\end{equation}
Change $as$ and $bt$ by new variables which we denote $s$ and $t$ again. Instead of (\ref{eq3}) we obtain
\[f((1-p)s+pt)\Bigl(f(s)p +f(b)(1-p)\Bigr) = \]
\begin{equation}\label{eq4}
= f((1-p)s)\Bigl(f(s)p +(1-p)\Bigr)f(pt)\Bigl(p +f(t)(1-p)\Bigr).
\end{equation}
By substituting $f(s) = 1/(1+\lambda s)$ ($\lambda >0$) into (\ref{eq4}) we obtain that the Laplace transform of exponential distribution satisfies this equation that is $L_1$ and $L_2$ are independent for exponentially distributed $X$ and $Y$.

Let us show inverse statement: if $L_1$ and $L_2$ are independent then $X$ and $Y$ have exponential distribution. To this aim put $t=s$ into (\ref{eq4}). We obtain
\begin{equation}\label{eq5}
f^2(s)= f(ps)f((1-p)s)\Bigl( p(1-p)f^2(s) +\bigl(p^2+(1-p)^2\bigr)f(s) +p(1-p)\Bigr).
\end{equation}
Now we would like to show the equation (\ref{eq5}) has no other solutions than $\varphi(s;\lambda) = 1/(1+\lambda s)$ ($\lambda >0$) which are Laplace transform of a probability distribution.
Suitable for this aim is the method of intensively monotone operators developed in \cite{KKM}. However, the equation (\ref{eq5}) has not too convenient form to apply any of the theorems from \cite{KKM} and, therefore, we apply the method itself. It is clear that:
\begin{enumerate}
\item $\varphi(s;\lambda)$ satisfies (\ref{eq5}) for any $\lambda >0$;
\item $\varphi(s;\lambda)$ is analytic in $s$ in some neighborhood of the point $s=0$;
\item for any positive $s_o$ there is $\lambda_o$ such that $f(s_o) = \varphi (s_o,\lambda_o)$, where $f$ is a fixed solution of (\ref{eq5}). 
\end{enumerate}
Let us consider the difference between $f(s)$ and $\varphi(s,\lambda_o)$. From 3. it follows that $\varphi(s_o;\lambda_o) = f(s_o)$. Define the set $S=\{ s: 0<s \leq s_o, f(s)=\varphi(s,\lambda_o)\}$ and denote $s^*=\inf S$. Because both $f(s)$ and $\varphi(s,\lambda_o)$ are continuous we have $f(s^*)=\varphi(s^*,\lambda_o)$. Show that the case $s^*>0$ is impossible. Really, in this case we would have
\[ f^2(s^*)= f(ps^*)f((1-p)s^*)\Bigl( p(1-p)f^2(s^*) +\bigl(p^2+(1-p)^2\bigr)f(s^*) +p(1-p)\Bigr)\]
and
\[ \varphi^2(s^*)= \varphi(ps^*,\lambda_o)\varphi((1-p)s^*,\lambda_o)\Bigl( p(1-p)\varphi^2(s^*,\lambda_o)+ \] \[+\bigl(p^2+(1-p)^2\bigr)\varphi(s^*,\lambda_o) +p(1-p)\Bigr).\]
We have $f(s^*)=\varphi(s^*,\lambda_o)$ and two previous relations give us
\begin{equation}\label{eq6}
 f(ps^*)f((1-p)s^*) = \varphi(ps^*,\lambda_o)\varphi((1-p)s^*,\lambda_o).
\end{equation}
However, $f(s) \neq \varphi(s,\lambda_o)$ for all $s\in (0,s^*)$. Therefore, either
$f(ps^*)f((1-p)s^*) > \varphi(ps,\lambda_o)\varphi((1-p)s^*,\lambda_o)$ or $f(ps^*)f((1-p)s^*) < \varphi(ps^*,\lambda_o)\varphi((1-p)s^*,\lambda_o)$ in contradiction with (\ref{eq6}). This leads us to the fact that $s^*=0$. The latest is possible only if there exists a sequence of pints $\{s_j,\; j=1,2, \ldots \}$ such that $s_j \to 0$ as $j \to \infty$ and $f(s_j)=\varphi(s_j,\lambda_o)$. It shows that $f(s)=\varphi(s,\lambda_o)$ for all $s \geq 0$ (see  Example 1.3.2 from \cite{KKM}). 
\end{proof}

{\it Let us note that under conditions of Theorem \ref{th1} we have
\begin{equation}\label{eq7}
((1-p)X+\varepsilon_p Y, pX+(1-\varepsilon_p)Y ) \stackrel{d}{=}(X,Y)
\end{equation}
if and only if $X$ has an exponential distribution}. Really, from Theorem \ref{th1} it follows that $X$ has exponential distribution. It is easy to verify that $X \stackrel{d}{=}(1-p)X +\varepsilon_p Y$ for independent exponentially distributed $X$ and $Y$.

The relation (\ref{eq7}) leads us a number of questions. Let us mention some of them:
\begin{enumerate}
\item[1)] Is the property $aX+bY \stackrel{d}{=}\bigl((1-p)a+pb\bigr)X +\bigl(a\varepsilon_p+b(1-\varepsilon_p)\bigr)Y$ characteristic for an exponential distribution?
\item[2)] Let $X_1, \ldots ,X_n, \ldots$ be a sequence of independent random variables distributed identical with $X$. Suppose that $Y,\; \varepsilon_p$ are from Theorem \ref{th1} and $\{\nu_q, \; q \in (0,1)\}$ has geometric distribution with the parameter $q$ independent on the sequence of $X_1, \ldots ,X_n, \ldots$. When $(1-p)X+\varepsilon_p Y \stackrel{d}{=} q \sum_{j=1}^{\nu_q}X_j$?
\item[3)] Does the relation
\[\E\{ pX+(1-\varepsilon_p)Y | (1-p)X+\varepsilon_p Y\} =\text{const} \]
characterize an exponential distribution?
\end{enumerate}
\vspace{-0.2cm} Below we shall try to answer these questions for some particular cases.

\vspace{0.1cm}
{\it Let us start with the question 1)}. Below we give two results in connection with this question.

\begin{thm}\label{th2}
Let $\varepsilon_p$ be a random variable taking values $1$ with probability $p \in (0,1)$ and $0$ with that of $1-p$. Suppose that $X,Y$ are independent identically distributed (i.i.d.) random variables a.s. positive and independent with $\varepsilon_p$. Linear forms $X$ and $(1-p)X+\varepsilon_p Y$ are identically distributed
\begin{equation}\label{eq8}
X \stackrel{d}{=}(1-p)X +\varepsilon_p Y
\end{equation}
if and only if $X$ has an exponential distribution.
\end{thm}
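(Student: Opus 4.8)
The plan is to translate the distributional identity \eqref{eq8} into a functional equation for the Laplace transform and then to solve that equation; the equation turns out to be considerably simpler than \eqref{eq5}, so the argument is a lighter variant of the proof of Theorem~\ref{th1}. Write $f(s)=\E\exp\{-sX\}$ for $s\ge 0$. Since $X$ is a.s.\ positive, $f$ is continuous on $[0,\infty)$ with $f(0)=1$ and $0<f(s)<1$ for $s>0$. As $X$ is independent of $(\varepsilon_p,Y)$, the summands $(1-p)X$ and $\varepsilon_pY$ are independent, and conditioning on $\varepsilon_p$ gives
\[
\E\exp\{-s[(1-p)X+\varepsilon_pY]\}=f((1-p)s)\bigl(pf(s)+(1-p)\bigr).
\]
Hence \eqref{eq8} is equivalent to
\begin{equation}\label{eqT2}
f(s)=f((1-p)s)\bigl(pf(s)+(1-p)\bigr),\qquad s\ge 0 .
\end{equation}
For the ``if'' direction I would substitute $f(s)=1/(1+\lambda s)$, $\lambda>0$ (the Laplace transform of the exponential law with mean $\lambda$), and verify \eqref{eqT2} directly; this also reproves the remark following Theorem~\ref{th1}.

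For the converse the key point is that \eqref{eqT2} linearizes. For $s>0$, dividing \eqref{eqT2} by $f(s)f((1-p)s)>0$ gives $1/f((1-p)s)=p+(1-p)/f(s)$, i.e.\ the function $h(s):=1/f(s)-1$, which is continuous on $[0,\infty)$ with $h(0)=0$ and $h(s)>0$ for $s>0$, satisfies
\begin{equation}\label{eqT2b}
h((1-p)s)=(1-p)\,h(s),\qquad s\ge 0 ,
\end{equation}
and this passage is reversible. Iterating \eqref{eqT2b} yields $h((1-p)^n s)/\bigl((1-p)^n s\bigr)=h(s)/s$ for every $n\ge1$ and $s>0$. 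Since $\dfrac{h(t)}{t}=\dfrac{1-f(t)}{t}\cdot\dfrac1{f(t)}$, and $\dfrac{1-f(t)}{t}=\int_0^\infty\dfrac{1-e^{-tx}}{t}\,d\mu(x)\uparrow\E X\in(0,\infty]$ as $t\downarrow0$ by monotone convergence ($\mu$ the law of $X$), while $f(t)\to1$, the limit $\lim_{t\downarrow0}h(t)/t=\E X$ exists in $(0,\infty]$. Letting $n\to\infty$ in the iterated identity then forces $h(s)/s\equiv\E X$ for all $s>0$; in particular $\lambda:=\E X$ is finite and positive, $h(s)=\lambda s$, and $f(s)=1/(1+\lambda s)$, the Laplace transform of the exponential distribution.

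An alternative route, closer to the proof of Theorem~\ref{th1}, is the method of intensively monotone operators: $\varphi(s;\lambda)=1/(1+\lambda s)$ solves \eqref{eqT2} for every $\lambda>0$ and is analytic near $0$; for fixed $s_o>0$ take $\lambda_o=(1/f(s_o)-1)/s_o>0$ so that $f(s_o)=\varphi(s_o;\lambda_o)$; with $s^*=\inf\{s\in(0,s_o]:f(s)=\varphi(s;\lambda_o)\}$, substituting $s^*$ into \eqref{eqT2} for $f$ and for $\varphi(\cdot;\lambda_o)$ and cancelling the factor $pf(s^*)+(1-p)\ge1-p>0$ gives $f((1-p)s^*)=\varphi((1-p)s^*;\lambda_o)$ with $(1-p)s^*<s^*$, forcing $s^*=0$, after which agreement along a sequence decreasing to $0$ yields $f\equiv\varphi(\cdot;\lambda_o)$ by Example~1.3.2 of \cite{KKM}. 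I expect the only delicate point, in either approach, to be keeping track that nothing is used about $f$ beyond its being a genuine Laplace transform (continuity at $0$, $f(0)=1$, positivity); in particular finiteness of $\E X$ must not be assumed but extracted from \eqref{eqT2b} together with the monotone behaviour of $(1-f(t))/t$ near the origin. The rest is routine.
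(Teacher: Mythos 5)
Your proof is correct, and your main argument takes a genuinely different route from the paper's. The paper derives the same functional equation $f(t)=f((1-p)t)\bigl(pf(t)+(1-p)\bigr)$, rewrites it in the form of the equation for stability under geometric summation (its displayed rewrite actually drops a factor $f((1-p)t)$ in the numerator --- evidently a typo), and then simply invokes the known characterization from \cite{KMM, KKRT}; the whole proof is three lines and outsources the analysis. You instead solve the equation from scratch: the substitution $h=1/f-1$ linearizes it to $h((1-p)s)=(1-p)h(s)$, iteration gives $h(s)/s=h\bigl((1-p)^ns\bigr)/\bigl((1-p)^ns\bigr)$, and the monotone-convergence identification of $\lim_{t\downarrow0}\bigl(1-f(t)\bigr)/t$ with $\E X\in(0,\infty]$ pins down $h(s)=s\,\E X$ for every $s>0$, which forces $\E X<\infty$ and $f(s)=1/(1+\lambda s)$. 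This is self-contained, elementary, uses nothing beyond $f$ being the Laplace transform of an a.s.\ positive law, and extracts finiteness of $\E X$ as a byproduct rather than assuming it; all the steps check out. Your alternative route via intensively monotone operators mirrors the paper's proof of Theorem~\ref{th1} (including the appeal to Example~1.3.2 of \cite{KKM} to pass from agreement along a sequence tending to $0$ to identity); it also works, but the first argument is the cleaner one and is arguably more informative than the paper's citation-based proof.
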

\begin{proof}
The forms are identically distributed if and only if their Laplace transforms satisfy the equation
\[ f(t)=f((1-p)t)\bigl(p f(t) +(1-p) \bigr)\]
or 
\[ f(t)= \frac{1-p}{1-p f((1-p)t)}. \]
Now the result follows from \cite{KMM, KKRT}.
\end{proof}

\begin{thm}\label{th3} Let $\varepsilon_p$ be a random variable taking values $1$ with probability $p \in (0,1)$ and $0$ with that of $1-p$. Suppose that $X,Y$ are independent identically distributed (i.i.d.) random variables a.s. positive and independent with $\varepsilon_p$. Let $0<a<b<1$. Define $V=(b-a)/\bigl(pb+(1-p)a\bigr)$ and $k=[\log(1/p)/\log(V)]+2$, where square brackets are used for integer part of the number in them. Suppose that $X$ has finite moment of order $k$. Linear forms
\begin{equation}\label{eq9}
aX+bY \stackrel{d}{=}\bigl((1-p)a+pb\bigr)X +\bigl(a\varepsilon_p+b(1-\varepsilon_p)\bigr)Y
\end{equation}
are identically distributed if and only if $X$ has an exponential distribution.
\end{thm}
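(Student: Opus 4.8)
The plan is to translate the distributional identity (\ref{eq9}) into a functional equation for the reciprocal of the common Laplace transform, use the finite moment of order $k$ to fix that reciprocal's Taylor polynomial at $0$, and then iterate the equation, the iteration being geometrically contractive exactly when $k$ is as large as the statement demands. Write $f$ for the common Laplace transform of $X$ and $Y$ and put $c=(1-p)a+pb$, so that $a<c<b$ with $c-a=p(b-a)$ and $b-c=(1-p)(b-a)$. Taking Laplace transforms in (\ref{eq9}) — conditioning on $\varepsilon_p$ in the second linear form — shows that (\ref{eq9}) is equivalent to $f(as)f(bs)=f(cs)\bigl(p\,f(as)+(1-p)f(bs)\bigr)$ for all $s\ge 0$. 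Since $X>0$ a.s., $f$ is strictly positive on $[0,\infty)$, so dividing by $f(as)f(bs)f(cs)$ and setting $g:=1/f$ gives
\[
g(cs)=p\,g(bs)+(1-p)\,g(as),\qquad s\ge 0 ,
\]
call it $(\ast)$. Every affine $g$ satisfies $(\ast)$ because $c$ is the convex combination of $a$ and $b$ with precisely the weights on the right; in particular, for exponential $X$ one has $f(s)=1/(1+\theta s)$ and $g(s)=1+\theta s$, and running the equivalence backwards gives the ``if'' direction.

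For the converse, since $\E X^{k}<\infty$, $f$ and hence $g$ is $k$ times differentiable at $0$, say $g(s)=\sum_{j=0}^{k}\gamma_{j}s^{j}+o(s^{k})$ with $\gamma_{0}=1$. Matching the coefficient of $s^{j}$ on the two sides of $(\ast)$ yields $\gamma_{j}c^{j}=\gamma_{j}\bigl(p\,b^{j}+(1-p)a^{j}\bigr)$; this is automatic for $j=0,1$, while for $2\le j\le k$ the strict convexity of $t\mapsto t^{j}$ on $(0,\infty)$ forces $\gamma_{j}=0$. Hence $g(s)=1+\lambda s+o(s^{k})$ with $\lambda=\E X>0$, and $r:=g-1-\lambda s$ again satisfies the homogeneous equation $r(cs)=p\,r(bs)+(1-p)\,r(as)$ (the affine functions $1$ and $\lambda s$ being solutions of $(\ast)$) together with $r(s)=o(s^{k})$ as $s\to 0^{+}$.

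Now I would iterate. Solving $r(cs)=p\,r(bs)+(1-p)\,r(as)$ for the value at the largest argument and setting $u=bs$ gives
\[
r(u)=\tfrac1p\,r\!\bigl(\tfrac{c}{b}\,u\bigr)-\tfrac{1-p}{p}\,r\!\bigl(\tfrac{a}{b}\,u\bigr),
\]
in which \emph{both} dilation factors $c/b$ and $a/b$ lie in $(0,1)$. Iterating $n$ times expresses $r(u)$ as the linear combination of the $n+1$ values $r\bigl((c/b)^{\,n-j}(a/b)^{\,j}u\bigr)$, $0\le j\le n$, with coefficients $p^{-n}{n\choose j}\bigl(-(1-p)\bigr)^{j}$. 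Fix $u>0$. Because $c/b,a/b<1$, for all sufficiently large $n$ all these arguments lie in a neighbourhood of $0$ on which $|r(s)|\le s^{k}$; inserting that bound and summing gives
\[
|r(u)|\le u^{k}\,\rho^{\,n},\qquad \rho:=\frac{(c/b)^{k}+(1-p)(a/b)^{k}}{p}.
\]
If $\rho<1$, letting $n\to\infty$ forces $r\equiv 0$, whence $g(s)=1+\lambda s$, $f(s)=1/(1+\lambda s)$, and $X$ (and therefore $Y$) is exponential.

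The hard part is precisely the inequality $\rho<1$: this is the only place the finite-moment hypothesis is used, and the value $k=[\log(1/p)/\log V]+2$ is chosen exactly so that $(c/b)^{k}+(1-p)(a/b)^{k}<p$, equivalently $p\,(b/c)^{k}>1+(1-p)(a/c)^{k}$ — an inequality governed by $V$ through $b/c=1+(1-p)V$ and $a/c=1-pV$. A secondary point, harmless because $c/b$ and $a/b$ are both below $1$, is that after finitely many iterations every argument $(c/b)^{n-j}(a/b)^{j}u$ has moved into the range where $|r(s)|\le s^{k}$ holds, the largest of them being $(c/b)^{n}u\to 0$.
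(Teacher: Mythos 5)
Your reduction to the functional equation $g(cs)=p\,g(bs)+(1-p)\,g(as)$ for $g=1/f$, and the observation that strict convexity of $t\mapsto t^{j}$ kills the Taylor coefficients $\gamma_{2},\dots,\gamma_{k}$, are exactly the paper's first two steps: the paper writes the same equation as $\varphi(s)=p\varphi(Bs)+(1-p)\varphi(As)$ with $A=a/c<1$, $B=b/c>1$, and uses the same nonvanishing condition $pB^{j}+(1-p)A^{j}\neq 1$ for $j\ge 2$. Where you diverge is in how the contraction is exploited. The paper places the difference $\xi=\varphi-\psi$ in the weighted space of continuous functions with $\int_{0}^{\infty}|\zeta(s)|s^{-(k+1)}\,ds<\infty$ --- which obliges it first to prove a growth bound $\varphi(s)<Cs^{\gamma}$ at infinity so that $\xi$ lies in that space --- and then invokes the Banach fixed-point theorem for the operator $\mathcal{A}\zeta(s)=p^{-1}\bigl(\zeta(s/B)-(1-p)\zeta(sA/B)\bigr)$, whose Lipschitz constant is precisely your $\rho=\bigl(1+(1-p)A^{k}\bigr)/(pB^{k})$. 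Your direct pointwise iteration of the same map is cleaner: since both dilation factors $c/b$ and $a/b$ lie in $(0,1)$, every argument flows to $0$, the local bound $|r(s)|\le s^{k}$ coming from $r(s)=o(s^{k})$ suffices, and neither the behaviour at infinity nor the function-space machinery is needed. So your route is a legitimate, and in fact simpler, version of the paper's argument.

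The one step you do not carry out --- the inequality $\rho<1$ --- is also the step the paper asserts without justification, and it does not follow from the stated choice of $k$. With $V=(b-a)/c=B-A$ one can have $V<1$, so that $\log(1/p)/\log V<0$ and $k\le 2$: for $a=0.4$, $b=0.5$, $p=1/2$ one gets $c=0.45$, $V=2/9$, $k\le 2$, while $\rho=\bigl(1+\tfrac12(8/9)^{2}\bigr)/\bigl(\tfrac12(10/9)^{2}\bigr)=113/50>1$; the smallest admissible exponent there is $k=9$. What is true, and what both your iteration and the paper's contraction actually need, is that $pB^{k}>1+(1-p)A^{k}$, which holds for all sufficiently large $k$ because $B>1$ forces $pB^{k}\to\infty$ while the right-hand side stays below $2-p$. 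So your proof is complete under the hypothesis that $X$ has a finite moment of some sufficiently high order determined by $a,b,p$; to match the theorem as stated you would either have to verify the inequality for the specific $k$ given (which, as the example shows, can fail with $V$ as defined) or flag that the constant in the statement needs to be corrected.
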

\begin{proof} In terms of Laplace transformation the relation (\ref{eq9}) takes form
\begin{equation}\label{eq10}
f(as)f(bs)=f(cs)\Big(p f(as) + (1-p) f(bs)\Bigr), \quad c=(1-p)a+pb.
\end{equation}
It is easy to verify that Laplace transform of exponential distribution $g(s)=1/(1+\lambda s)$ is a solution of (\ref{eq10}) for arbitrary $\lambda>0$.
Therefore, (\ref{eq9}) holds for exponential distribution with arbitrary scale parameter. Introduce new functions $\varphi(s) =1/f(s)$ and $\psi(s)=1/g(s)$. It is clear that $\varphi(0) = \psi(0)=1$ and 
\begin{equation}\label{eq11}
\varphi(s)=p\varphi(Bs)+(1-p)\varphi(As),
\end{equation}
where $A=a/c<1$ and $B=b/c>1$. Obviously, the function $\psi$ satisfies equation (\ref{eq11}) as well. Note that (\ref{eq11}) is not Cauchy equation because $a,b$ and $c$ are fixed numbers. The function $f(s)$ is Laplace transform of a probability distribution which is not degenerate at zero. Therefore, $\varphi(s)$ is greater or equal to 1 for all positive values of $s$ and tends monotonically to infinity as $s \to \infty$. Because $X$ has moments up to order $k$ the functions $f(s)$ and $\varphi (s)$ are at least $k$ times differentiable for $s \in [0,\infty)$. It is easy to see that $\log(1/p)/\log(V) >1$ and, therefore, $k \geq 3$. It is also clear that 
\begin{equation}\label{eq12}
pB^j+(1-p)A^j \neq 1
\end{equation}
for $j=2,3,\ldots$ while the left hand side of (\ref{eq11}) coincide with $1$ for $j=0,1$. Therefore, the derivative $\varphi^{(j)}(0)$ may be arbitrary for $j=0,1$ and equals to zero for $j=2, \ldots ,k$ (to obtain this it is sufficient take $j$-th derivative from both sides of (\ref{eq11}) and setting $s=0$). 

Because the function $f(s)$ is Laplace transform of a probability distribution which is not degenerate at zero then $\varphi (0)=1$, $\varphi^{\prime}(0)>0$. From (\ref{eq11}) we have
\[ \varphi(B s) =\frac{1}{p}\Bigl( \varphi(s)-(1-p)\varphi(A s)\Bigr) <\frac{1}{p}\varphi(s). \]
Therefore,
\[ \varphi (B^m s) < \frac{1}{p^m}\varphi (s), \quad m=1,2, \ldots , \]
and, consequently, 
\[ \varphi (s) < C s^{\gamma} \]
for sufficiently large values of $s$. Here $C>0$ is a constant and $\gamma = \log(1/p)/\log(B)$. The difference $\xi (s)=\varphi (s)- \psi (s)$ satisfies equation (\ref{eq11}) and conditions:
\begin{enumerate}
\item[$a)$] $\xi^{(j)}(0)=0$ for $j=0,1, \ldots k$;
\item[$b)$] $|\xi (s)| < C s^{\gamma}$ for sufficiently large $s$.
\end{enumerate}

Introduce a space $\go F$ of real continuous functions $\zeta (s)$ on $[0,\infty)$ for which the integral $\int_{0}^{\infty}|\zeta(s)|/s^{k+1} ds$ converges. According to properties $a)$ and $b)$ we have $\xi \in {\go F}$. Define a distance $d$ on $\go F$ as 
 \[ d(\zeta_1,\zeta_2) =\int_{0}^{\infty}\bigl|\zeta_1(s) - \zeta_2(s)\bigr| \frac{ds}{s^{k+1}}. \]
It is clear that $({\go F},d)$ is a complete metric space. Introduce the following operator
\[ \mathcal{A}(\zeta) =\frac{1}{p}\Bigl( \zeta (s/B )-(1-p)\zeta (sA/B)\Bigr) \]
from ${\go F}$ to ${\go F}$. For $\zeta= \zeta_1 - \zeta_2$, where $\zeta_1, \zeta_2 \in {\go F}$, we have 
\[ d(\mathcal{A}(\zeta_1), \mathcal{A}(\zeta_2)) =\int_{0}^{\infty} \Bigl|\frac{1}{p}\Bigl( \zeta (s/B )-(1-p)\zeta (sA/B)\Bigr)\Bigr|\frac{d s}{s^{k+1}} \leq  \]
\[ \leq \int_{0}^{\infty} \Bigl|\frac{1}{p}\zeta (s/B )\Bigr|\frac{d s}{s^{k+1}} + \int_{0}^{\infty}\Bigl| \frac{1}{p}(1-p)\zeta (sA/B)\Bigr|\frac{d s}{s^{k+1}} \leq \]
\[ \leq \frac{1+(1-p)A^{k}}{p B^{k}} d(\zeta_1,\zeta_2) = \rho d(\zeta_1,\zeta_2),\]
where 
\[ \rho = \frac{1+(1-p)A^{k}}{p B^{k}} <1.\]
Now we see that $\mathcal{A}$ is contraction operator. It has only one fixed point in the space ${\go F},d$. Obviously, this point is $\zeta(s) = 0$ for all $s \geq 0$. In other words, $\xi (s) =0$ for all $s \geq 0$ and $\varphi = \psi$.
\end{proof}
Let us mention that Theorems \ref{th2} and \ref{th3} give particular answers for the question 1) above.  

\vspace{0.2cm}
{\it Went now to the question 2)}. Namely, let $X_1, \ldots ,X_n, \ldots$ be a sequence of independent random variables distributed identical with $X$. Suppose that $Y,\; \varepsilon_p$ are from Theorem \ref{th1} and $\{\nu_q, \; q \in (0,1)\}$ has geometric distribution with the parameter $q$ independent on the sequence of $X_1, \ldots ,X_n, \ldots$. When $(1-p)X+\varepsilon_p Y \stackrel{d}{=} q \sum_{j=1}^{\nu_q}X_j$? In terms of Laplace transform we have solve the following equation
\begin{equation}\label{eq13}
f((1-p)s)\Bigl((1-p)+p f(s)\Bigr)\Bigl(1-(1-q)f(qs) \Bigr)=q f(qs).
\end{equation}
The case $q=1-p$ appears to be very simple.
\begin{thm}\label{th4}
Under conditions above, if $q=1-p$ equation (\ref{eq13}) hold if and only if $f(s)$ is Laplace transform of an exponential distribution.
\end{thm}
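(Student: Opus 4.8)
The plan is to show that, when $q=1-p$, equation (\ref{eq13}) collapses to the single functional equation already resolved in Theorem \ref{th2}, and then to quote that theorem. Substituting $q=1-p$ (so that $1-q=p$ and $qs=(1-p)s$), equation (\ref{eq13}) reads
\[ f((1-p)s)\Bigl((1-p)+pf(s)\Bigr)\Bigl(1-pf((1-p)s)\Bigr)=(1-p)f((1-p)s). \]
Since $X$ is a.s.\ positive, $f(t)=\E e^{-tX}\in(0,1]$ for every $t\ge 0$; in particular the factor $f((1-p)s)$ never vanishes and may be cancelled from both sides, leaving $\bigl((1-p)+pf(s)\bigr)\bigl(1-pf((1-p)s)\bigr)=1-p$.

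The next step is a short algebraic simplification. Expanding this identity and then dividing through by $p>0$, one finds it is equivalent to
\[ f(s)=f((1-p)s)\Bigl(pf(s)+(1-p)\Bigr), \]
which is precisely the equation appearing in the proof of Theorem \ref{th2}. Every step above is reversible — the only point that is not purely formal is the cancellation of $f((1-p)s)$, justified by the strict positivity of $f$ — so, for $q=1-p$, equation (\ref{eq13}) and the displayed equation have exactly the same solutions in the class of Laplace transforms.

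Consequently both implications of the theorem reduce to Theorem \ref{th2}: if $f(s)=1/(1+\lambda s)$ with $\lambda>0$ then $f$ solves the displayed equation (as noted in Theorem \ref{th2}), hence (\ref{eq13}) with $q=1-p$; conversely, any Laplace transform solving (\ref{eq13}) with $q=1-p$ solves the displayed equation and is therefore, by Theorem \ref{th2} (equivalently, by \cite{KMM,KKRT}), the Laplace transform of an exponential distribution.

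This argument is essentially routine — which is presumably why the case $q=1-p$ is described as ``very simple''. There is no real obstacle: the only places needing a little care are (i) noting that the common factor $f((1-p)s)$ may be removed, which is immediate since the Laplace transform of an a.s.\ positive random variable is strictly positive on $[0,\infty)$, and (ii) performing the simplification cleanly enough to recognize the Theorem \ref{th2} equation; the substantive content has already been supplied by Theorem \ref{th2}.
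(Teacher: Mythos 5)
Your proof is correct. Setting $q=1-p$ in (\ref{eq13}), cancelling the factor $f((1-p)s)$ (strictly positive because $f$ is the Laplace transform of an a.s.\ positive random variable), and expanding does give exactly $f(s)=f((1-p)s)\bigl(pf(s)+(1-p)\bigr)$, which is the Laplace-transform form of relation (\ref{eq8}); since every step is reversible, both implications then follow from Theorem \ref{th2}. The paper takes a slightly different final step: instead of recognizing the Theorem \ref{th2} equation, it substitutes $1/f(s)=1+\xi(s)$ into the same simplified identity and arrives at the pure scaling equation $\xi(s)=\frac{1}{1-p}\xi\bigl((1-p)s\bigr)$, which it resolves by appealing to \cite{KKM} ``similarly to the proof of Theorem \ref{th2}.'' The two reductions are essentially equivalent in content, but they buy slightly different things: your version makes explicit that for $q=1-p$ the geometric-summation identity of question 2) coincides, as a statement about Laplace transforms, with $X\stackrel{d}{=}(1-p)X+\varepsilon_p Y$, so no analytic input beyond Theorem \ref{th2} is needed; the paper's change of function isolates the linear scaling equation in the form in which the uniqueness machinery of \cite{KKM} is directly stated. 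Either route is acceptable, and your algebra checks out.
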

\begin{proof} For $q=1-p$ let us make the change of function. Namely, set $1/f(s)=1+\xi(s)$. After simple transformations we come to 
\[ \xi (s) = \frac{1}{1-p}\xi ((1-p)s). \]
The statement follows now from \cite{KKM} similarly to the proof of Theorem \ref{th2}.
\end{proof}

\begin{thm}\label{th5}
Under conditions above suppose the distribution of $X$ has the moments of all orders. Let $q^{k-1}\neq p+(1-p)^k$ for all $k=2,3, \ldots$. Equation (\ref{eq13}) holds if and only if $f(s)$ is Laplace transform of an exponential distribution.
\end{thm}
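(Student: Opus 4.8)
The plan is to analyze equation (\ref{eq13}) through a combination of Taylor expansion at $s=0$ (to pin down the candidate scale parameter and to force vanishing of all higher-order corrections) and a contraction-mapping / intensively-monotone-operator argument (in the spirit of the proof of Theorem \ref{th3}) to conclude that the only admissible solution is the exponential Laplace transform $g(s)=1/(1+\lambda s)$. First I would verify directly that $g(s)=1/(1+\lambda s)$ solves (\ref{eq13}): substituting and clearing denominators reduces to a polynomial identity in $s$, which holds because of the identity $q\sum_{j\ge 1}(1-q)^{j-1}g(qs)^j=g(qs)/(1-(1-q)g(qs))$ is exactly the Laplace transform of $q\sum_{j=1}^{\nu_q}X_j$ when $X_j$ are exponential. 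Since an exponential sum of a geometric number of exponentials is again exponential (with scale parameter $\lambda/q\cdot$ something), and $(1-p)X+\varepsilon_p Y$ is exponential with the matching scale, the two sides agree.

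For the converse, I would introduce $\varphi(s)=1/f(s)$, so $\varphi(0)=1$, $\varphi'(0)=\lambda>0$ (the mean of $X$), and rewrite (\ref{eq13}) in terms of $\varphi$. The key structural fact to extract is a functional relation of the form $\varphi(s) = $ (something involving $\varphi(qs)$, $\varphi((1-p)s)$, and $\varphi((1-p)s+\cdots)$ or lower-order terms), from which one can compute $\varphi^{(k)}(0)$ recursively. Taking $k$-th derivatives at $s=0$ should yield, for each $k\ge 2$, an equation of the shape $\bigl(q^{k-1}-p-(1-p)^k\bigr)\varphi^{(k)}(0) = (\text{polynomial in lower derivatives})$; here the hypothesis $q^{k-1}\neq p+(1-p)^k$ is exactly what makes this recursion solvable and forces $\varphi^{(k)}(0)$ to coincide with the corresponding derivative $\psi^{(k)}(0)$ of $\psi(s)=1+\lambda s$, i.e. $\varphi^{(k)}(0)=0$ for all $k\ge 2$. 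Since $X$ has moments of all orders, $f$ and hence $\varphi$ are $C^\infty$ at $0$, so this gives $\varphi(s)-\psi(s)=o(s^n)$ for every $n$.

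To upgrade this formal agreement to genuine equality I would set $\xi(s)=\varphi(s)-\psi(s)$ and, using the rewritten functional equation together with the growth bound on $\varphi$ (obtained as in Theorem \ref{th3}: from $\varphi(s)\le \frac1p\varphi(\cdot)$-type inequalities one gets $\varphi(s)\le Cs^\gamma$ for large $s$, and $\psi$ grows linearly), show that $\xi$ lies in a complete metric space $\go F$ of continuous functions with $\int_0^\infty |\zeta(s)|\,s^{-(n+1)}\,ds<\infty$ for a suitably large $n$, equipped with the $L^1(s^{-(n+1)}ds)$ metric. One then exhibits $\xi$ as a fixed point of a contraction $\mathcal A$ built from the functional equation by the change of variables $s\mapsto s/q$, $s\mapsto s/(1-p)$, etc.; the contraction constant is a sum of powers $q^{n}$, $(1-p)^n$ (times bounded coefficients) that is $<1$ once $n$ is chosen large enough. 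Since $0$ is also a fixed point of $\mathcal A$ in $\go F$, uniqueness gives $\xi\equiv 0$, hence $f=g$.

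The main obstacle I anticipate is the algebraic bookkeeping in putting (\ref{eq13}) into a clean "fixed-point" form: unlike (\ref{eq11}), equation (\ref{eq13}) is genuinely nonlinear in $f$ (it contains products $f((1-p)s)f(s)f(qs)$ and the factor $1-(1-q)f(qs)$), so linearizing around $\psi$ to extract the operator $\mathcal A$ requires care — one must handle the nonlinear remainder terms, showing they are dominated by $d(\varphi,\psi)$ with a small constant, which works precisely because those terms carry extra factors of $f$ bounded by $1$ on $(0,\infty)$ and extra contractive rescalings. Getting the constant below $1$ for a single uniform choice of $n$, valid under only the stated non-resonance condition $q^{k-1}\neq p+(1-p)^k$, is the delicate point; everything else follows the template already established for Theorems \ref{th2}--\ref{th4}.
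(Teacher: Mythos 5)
Your core argument is the same as the paper's: differentiate the functional equation $k$ times at $s=0$ and observe that the coefficient multiplying the highest-order derivative is $q\bigl((1-p)^k+p-q^{k-1}\bigr)$, which the non-resonance hypothesis makes nonzero for $k\ge 2$ (and zero for $k=1$), so all derivatives at the origin are recursively determined by $f^{\prime}(0)$ and must coincide with those of the exponential Laplace transform with the same mean. The paper works with $f$ directly rather than $\varphi=1/f$, but that is cosmetic. Where you diverge is the final passage from ``all Taylor coefficients at $0$ agree'' to $f=g$: the paper essentially stops at the moment identification, while you bolt on a contraction-mapping argument in a weighted $L^1$ space.

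That appended step is where your proposal has a genuine gap, and it is also unnecessary. The growth bound $\varphi(s)\le Cs^{\gamma}$ in the proof of Theorem \ref{th3} comes from the relation $\varphi(Bs)<\varphi(s)/p$ with a dilation factor $B=b/c>1$ present in equation (\ref{eq11}); equation (\ref{eq13}) contains only the contractive rescalings $s\mapsto(1-p)s$ and $s\mapsto qs$ alongside $s$ itself, so the analogous manipulation only yields $\varphi\bigl(\tfrac{1-p}{q}w\bigr)\le \varphi(w)/q$, which controls growth only when $q<1-p$; for $q\ge 1-p$ you have no a priori polynomial bound on $\varphi$ at infinity, and hence no guarantee that $\xi\in\go F$. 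In addition, linearizing the genuinely nonlinear equation around $\psi$ leaves remainder terms whose smallness you would need $\|\xi\|$ to be a priori small to control, which is circular. The clean way to finish — and what the paper's conclusion implicitly relies on — is that matching all derivatives of $f$ at $0$ means $X$ has the same moments as an exponential law, and the exponential distribution is moment-determinate (its moments $k!\lambda^k$ satisfy Carleman's condition), so $X$ is exponential. If you replace your contraction step by that one-line observation, your proof is complete and matches the paper's.
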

\begin{proof} Let us rewrite (\ref{eq13}) in the form
\begin{equation}\label{eq14}
f((1-p)s)\Bigl((1-p)+p f(s)\Bigr)\Bigl(1-(1-q)f(qs) \Bigr)-q f(qs)=0.
\end{equation} 
Because $X$ has moments of all orders its Laplace transform is infinite differentiable for all values of $s \geq 0$. Let us differentiate $k$ times the both sides of (\ref{eq14}) with respect to $s$ and put $s=0$. The coefficient at
$f^{k}(0)$ is
\begin{equation}\label{eq15}
(1-p)^kq+qp-(1-q)q^k-q^{k+1} = q\bigl((1-p)^k+p-q^{k-1}\bigr) \neq 0
\end{equation}
for $k=2,3, \ldots$. For $k=1$ coefficient (\ref{eq15}) is zero. This means the derivatives of $f$ of order $k>1$ calculated  at zero are uniquely defined by the value of first derivative $f^{\prime}(0)<0$. However, Laplace transform of the Exponential distribution satisfies (\ref{eq14}), and its derivative at zero may be taking as arbitrary negative number.   
\end{proof}

\vspace{0.2cm}{\it Let us now went to the question 3)}. 
\begin{thm}\label{th6}
Let $X,Y$ are i.i.d. positive random variables possesing finite moments of all orders. Suppose that $\varepsilon_p$ is a Bernoulli random variate independent with $X,Y$ and takes value $1$ with probability $p$ and 0 with that of $1-p$, $0<p<1$. The relation
\begin{equation}\label{eq16}
\E\{ pX+(1-\varepsilon_p)Y | (1-p)X+\varepsilon_p Y\} =\text{const} 
\end{equation} 
holds if and only if $X$ has an exponential distribution.
\end{thm}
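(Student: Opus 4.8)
The plan is to convert the constancy-of-regression hypothesis into an identity for $f(s)=\E e^{-sX}$ and then, as in the proof of Theorem \ref{th5}, to show that this identity pins down every moment of $X$ to be that of an exponential law. Put $q=1-p$, $U=pX+(1-\varepsilon_p)Y$, $V=(1-p)X+\varepsilon_p Y$, and $\mu=\E X$ (positive, since $X>0$ a.s.). Because $X$, hence also $Y$ and $U$, have moments of all orders, the relation $\E\{U\mid V\}=\text{const}$ is equivalent to $\E\{(U-\mu)e^{-tV}\}=0$ for all $t\ge0$: the tower property forces the constant to equal $\E U=p\mu+(1-p)\mu=\mu$, and conversely, if $\E\{(U-\mu)e^{-tV}\}=0$ for every $t\ge0$ then the finite signed measure $B\mapsto\E\{(U-\mu)\1_{\{V\in B\}}\}$ on $[0,\infty)$ has identically vanishing Laplace transform, hence is the null measure, which is precisely $\E\{U\mid V\}=\mu$.

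Next I would condition on $\varepsilon_p$. On $\{\varepsilon_p=1\}$ we have $U=pX$, $V=qX+Y$; on $\{\varepsilon_p=0\}$ we have $U=pX+Y$, $V=qX$. Using $\E\{Xe^{-sX}\}=-f'(s)$ and the independence of $X,Y,\varepsilon_p$ one obtains
\[
\E\{e^{-tV}\}=f(qt)\bigl(pf(t)+q\bigr),\qquad
\E\{Ue^{-tV}\}=-p\,f'(qt)\bigl(pf(t)+q\bigr)+q\mu\,f(qt);
\]
hence $\E\{Ue^{-tV}\}=\mu\,\E\{e^{-tV}\}$ is, after cancelling the common summand $q\mu f(qt)$ and dividing by $p$, equivalent to the functional-differential equation
\begin{equation}\label{eqth6}
-f'(qt)\bigl(pf(t)+q\bigr)=\mu\,f(qt)\,f(t),\qquad q=1-p .
\end{equation}
A direct substitution shows that $g(s)=1/(1+\lambda s)$, for which $\E X=\lambda$, satisfies \eqref{eqth6}; this settles the ``if'' part.

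For the converse I would run the moment argument used for Theorem \ref{th5}. Since $X$ has all moments, $f\in C^\infty[0,\infty)$ with $f^{(k)}(0)=(-1)^k m_k$, where $m_k=\E X^k$, $m_0=1$, $m_1=\mu$. Differentiating \eqref{eqth6} $n$ times at $t=0$ via the Leibniz rule on both sides produces an identity in which the highest moment $m_{n+1}$ occurs only through the term $-q^n f^{(n+1)}(0)=(-1)^n q^n m_{n+1}$ (arising from the $n$-th derivative of $-f'(qt)$ multiplied by $pf(0)+q=1$), with the nonzero coefficient $(-1)^n q^n$, while every other moment that appears has index $\le n$. Thus \eqref{eqth6} determines $m_{n+1}$ uniquely from $m_0,\dots,m_n$, so the entire sequence $(m_k)$ is fixed by $m_0=1$ and $m_1=\mu$. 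Because the exponential distribution with mean $\mu$ also has all moments and, by the previous paragraph, satisfies \eqref{eqth6}, its moment sequence obeys the same recursion with the same initial data; hence $m_k=k!\,\mu^k$ for every $k$. The sequence $(k!\,\mu^k)$ satisfies Carleman's condition, so it determines the distribution uniquely, and therefore $X$ is exponential.

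The genuinely delicate points are the equivalence claimed in the first paragraph --- it rests on $\E|U|<\infty$ and on the uniqueness of Laplace transforms of finite signed measures on $[0,\infty)$ --- and the verification that $m_{n+1}$ really enters the $n$-th derivative of \eqref{eqth6} with nonzero coefficient; the rest, namely the conditioning computation leading to \eqref{eqth6}, the Leibniz differentiation, and the explicit recursion, is routine and I would record it in a few displayed lines.
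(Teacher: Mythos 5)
Your proposal is correct and follows essentially the same route as the paper: you arrive at the same Laplace-transform functional equation $-f'\bigl((1-p)t\bigr)\bigl(pf(t)+(1-p)\bigr)=\mu\,f\bigl((1-p)t\bigr)f(t)$ (the paper's (\ref{eq17}) divided by $p$), verify it for $1/(1+\lambda s)$, and then differentiate repeatedly at $t=0$ to pin down all Taylor coefficients. The only substantive difference is the endgame: the paper passes to $\varphi=1/f$, shows $\varphi^{(m)}(0)=0$ for $m\ge 2$, and simply asserts that $\varphi$ is a linear polynomial, whereas you work with the moments of $X$ directly and close the argument with Carleman's condition / moment determinacy of the exponential law --- a step the paper glosses over and which your version supplies; your first paragraph also makes explicit the equivalence between constancy of regression and the vanishing of $\E\{(U-\mu)e^{-tV}\}$, which the paper takes for granted.
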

\begin{proof}
Equation (\ref{eq16}) may be written in terms of Laplace transform as
\begin{equation}\label{eq17}
-p f^{\prime}((1-p)t)\Bigl(pf(t) + (1-p)\Bigr) = \lambda p f((1-p)t)f(t),
\end{equation}
where $\lambda =\E X >0$. After changing function $f$ to $\varphi = 1/f$ we obtain
\begin{equation}\label{eq18}
\varphi^{\prime}\left((1-p)t \right)\Bigl( p+(1-p)\varphi (t) \Bigr) = \lambda \varphi \left((1-p)t \right). 
\end{equation}
Putting here $t=0$ and taking into account $\varphi (0) =1$ we obtain $\varphi^{\prime}(0)=\lambda$ (what is obvious).
However, differentiating both part of (\ref{eq18}) with respect to $t$ we obtain 
\begin{equation}\label{eq19}
\varphi^{\prime \prime}\bigl((1-p)t\bigr) (1-p) \Bigl( p+(1-p)\varphi(t)\Bigr) +(1-p)\varphi^{\prime}\bigl((1-p)t\bigr) \varphi^{\prime}(t) = \lambda (1-p) \varphi^{\prime}\bigl((1-p)t\bigr)
\end{equation}
Putting here $t=0$ we obtain $\varphi^{\prime \prime}(0) =0$. The induction shows that $\varphi^{(m)}(0) = 0$ for all $m=2,3, \ldots$. It implies $\varphi(t)$ is a linear polynomial in $t$ and, therefore, $f(t)$ is Laplace transform of an exponential distribution.
\end{proof}

\section{Few words in conclusion}\label{sec3} 
\setcounter{equation}{0} 

Of course, there are many other problems connected to characterizations of exponential distribution by the properties of linear forms with random coefficients. Let us mention some questions on identical distribution of quadratic forms, constant of regression of quadratic statistic on linear form with random coefficient, reconstruction of a distribution through the common distribution of a set of linear forms with random coefficients. 

Let us give an example of a little bit nonstandard characteristic property of an exponential distribution. Namely, the relation (ref{eq8}) shows that the distribution of linear form $(1-p)X+\varepsilon_p Y$ does not depend on the parameter $p \in (0,1)$. It appears that this is a characteristic property of exponential distribution.

\begin{thm}\label{th7}
Let $X,Y$ be i.i.d. positive random variables. Suppose that $\{ \varepsilon_p, \; p\in (0,1)\}$ is a family of Bernoulli random variables, $\p\{\varepsilon_p =1\}=p$ and $\p\{\varepsilon_p =0\}=1-p$. The distribution of linear form $L=(1-p)X + \varepsilon_p Y$ does not depend on the parameter $p \in (0,1)$ if and only if $X$ has an exponential distribution.
\end{thm}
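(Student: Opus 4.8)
The plan is to reduce Theorem~\ref{th7} to Theorem~\ref{th2}, since the Laplace‑transform identity that arises is exactly the one analyzed there. Write $f$ for the common Laplace transform of $X$ and $Y$. Since $\varepsilon_p Y$ is independent of $(1-p)X$ and equals $Y$ with probability $p$ and $0$ with probability $1-p$, the Laplace transform of $L=(1-p)X+\varepsilon_p Y$ is
\[
g_p(s)=\E e^{-sL}=f\bigl((1-p)s\bigr)\Bigl(p\,f(s)+(1-p)\Bigr),\qquad s\ge 0,
\]
and the hypothesis of the theorem is precisely that, for each fixed $s\ge 0$, the number $g_p(s)$ does not depend on $p\in(0,1)$.

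The first step I would take is to identify that common value. Because $f$ is continuous on $[0,\infty)$, letting $p\to 0^+$ gives $g_p(s)\to f(s)\cdot 1=f(s)$; since $g_p(s)$ is constant in $p$, it must therefore equal $f(s)$ for every $p\in(0,1)$ and every $s\ge 0$. Equivalently, $X\stackrel{d}{=}(1-p)X+\varepsilon_p Y$ for all $p\in(0,1)$, that is, relation (\ref{eq8}) holds. (One could instead let $p\to 1^-$ and use $(1-p)X+\varepsilon_p Y\to Y\stackrel{d}{=}X$ in distribution.) The ``only if'' direction then follows immediately: fix any one $p\in(0,1)$ and apply Theorem~\ref{th2} to conclude that $X$ is exponential. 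For the ``if'' direction, if $X$ is exponential then Theorem~\ref{th2} gives $X\stackrel{d}{=}(1-p)X+\varepsilon_p Y$ for every $p$, so the law of $L$ equals that of $X$ and in particular does not depend on $p$.

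The only place that needs a word of justification is the limit transition in $p$, and I expect no genuine difficulty there: $f$, being the Laplace transform of a probability measure on $[0,\infty)$, is continuous on $[0,\infty)$ (and analytic on $(0,\infty)$), so the convergence $g_p(s)\to f(s)$ as $p\to 0^+$ is automatic. All the substantive work — showing that $f(s)=f((1-p)s)\bigl(pf(s)+1-p\bigr)$ forces $f$ to be the transform of an exponential law — has already been carried out in Theorem~\ref{th2}; should one prefer a self-contained argument, the same fixed‑point / intensively monotone operator technique used above for (\ref{eq5}) and (\ref{eq11}) applies to this equation as well.
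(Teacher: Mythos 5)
Your proof is correct, but it follows a genuinely different route from the paper's. The paper works at the other endpoint of the parameter interval: it differentiates the identity $f\bigl((1-p)t\bigr)\bigl(1-p+pf(t)\bigr)=\psi(t)$ with respect to $p$ (the derivative must vanish since the left side is constant in $p$) and then lets $p\to 1^{-}$, which produces the explicit formula $f(t)=1/(1+t\,\E X)$ in one stroke; this requires it to argue along the way that the expression is differentiable in $p$ and that $X$ has a finite first moment. You instead let $p\to 0^{+}$ and use only the continuity of the Laplace transform to identify the $p$-independent value of $g_p(s)$ as $f(s)$ itself, thereby showing that the hypothesis is equivalent to relation (\ref{eq8}) holding for every $p$, and then you invoke Theorem \ref{th2}. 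Your limit transition is elementary and needs no differentiability or moment considerations, so your derivation of $X\stackrel{d}{=}(1-p)X+\varepsilon_p Y$ is cleaner than the corresponding step in the paper; the trade-off is that you then rest on Theorem \ref{th2}, whose proof the paper delegates to external references, whereas the paper's argument for Theorem \ref{th7} is self-contained and yields the exponential Laplace transform explicitly. Both arguments are valid, and yours makes transparent why the statement is really just Theorem \ref{th2} in disguise.
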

\begin{proof} If the distribution of linear form $L=(1-p)X + \varepsilon_p Y$ does not depend on the parameter $p \in (0,1)$ then its Laplace transform 
\begin{equation}\label{eq20}
f\bigl((1-p)t\bigr)\Bigl( 1-p+pf(t)\Bigr) = \psi (t)
\end{equation}
possesses the same property. The relation (\ref{eq20}) shows that the function $ff\bigl((1-p)t\bigr)$ is differentiable with respect to $p$ and, therefore, according to $t\geq 0$. The latest implies that $X$ possesses finite first moment. 

Let us differentiate the both sides of (\ref{eq20}) with respect to $p$:

\[ -t f^{\prime}\bigl((1-p)t \bigr) \Bigl( (1-p)+pf(t)\Bigr) - f\bigl((1-p)t \bigr) \bigl(1-f(t)\bigr). \]  
Passing here to limit as $p \to 1$ we obtain
\[ f(t) = \frac{1}{1+t \E X}. \]

\vspace{-0.2cm}
\end{proof}

In the book (\cite{KKM}, p.p. \hspace{-10
pt}153--157) there was given a characterization of Marshall-Olkin law by the property of identical distribution of a monomial and a linear form with random matrix coefficient. It would be interesting to study possibility of characterization of Marshall-Olkin distribution by independence of suitable statistics. 

Some of properties mentioned above will be a subject for our future work. 

\section*{Acknowledgment}

The study was partially supported by grant GA\v{C}R 19-04412S (Lev Klebanov).

\end{document}